\newtheorem{theorem}{Theorem}[section]
\theoremstyle{definition}
\newtheorem{definition}[theorem]{Definition}
\theoremstyle{remark}
\newtheorem{remark}[theorem]{Remark}
\numberwithin{equation}{section}
\newcommand{\be}{\begin{equation}}
\newcommand{\ee}{\end{equation}}
\newcommand{\bea}{\begin{eqnarray}}
\newcommand{\eea}{\end{eqnarray}}
\begin{document}

\begin{center}
{\large{\textbf{Almost Kenmotsu metric as quasi Yamabe soliton}}}

\end{center}
\vspace{0.1 cm}

\begin{center}

Dibakar Dey and Pradip Majhi\\
Department of Pure Mathematics,\\
University of Calcutta,
35 Ballygunge Circular Road,\\
Kolkata - 700019, West Bengal, India,\\
E-mail: deydibakar3@gmail.com, mpradipmajhi@gmail.com\\
\end{center}

\vspace{0.5 cm}
\textbf{Abstract:} In the present paper, we characterize  a class of almost Kenmotsu manifolds admitting quasi Yamabe soliton. It is shown that if a $(k,\mu)'$-almost Kenmotsu manifold admits a quasi Yamabe soliton $(g,V,\lambda,\alpha)$ with $V$ pointwise collinear with $\xi$, then (1) $V$ is a constant multiple of $\xi$, (2) $V$ is a strict infinitesimal contact transformation and (3) $(\pounds_
V h')X = 0$ for any vector field $X$. Finally an illustrative example is presented to support the result.\\

\textbf{Mathematics Subject Classification 2010:} Primary 53D15; Secondary 35Q51.\\

\textbf{Keywords:} Almost Kenmotsu manifolds, Yamabe soliton, Quasi Yamabe soliton, infinitesimal contact transformations. \\

\section{Introduction}
In 1989, Hamilton \cite{ham1} proposed the concept of Yamabe flow defined as
\bea
\nonumber \frac{\partial g}{\partial t} = - rg,
\eea
where $r$ is the scalar curvature of the manifold. A Riemannian manifold $(M,g)$ is said to admit a Yamabe soliton $(g,V,\lambda)$ if
\bea
\frac{1}{2}\pounds_V g = (r - \lambda)g, \label{1.1}
\eea
where $\pounds_V$ denotes the Lie derivative along $V$ and $\lambda$ is a constant. A Yamabe soliton is said to be shrinking, steady or expanding according as $\lambda < 0$, $\lambda = 0$ or $\lambda > 0$ respectively. If $V$ is a gradient of some smooth function $f : M \rightarrow \mathbb{R}$, that is, $V = Df$, where $D$ is the gradient operator, then the Yamabe soliton is said to be a gradient Yamabe soliton and then (\ref{1.1}) reduces to
\bea
\nabla^2 f = (r - \lambda)g, \label{1.2}
\eea
where $\nabla^2 f$ is the Hessian of $f$.\\

\noindent Extending the notion of Yamabe soliton, Chen and Deshmukh \cite{cd1} introduced the notion of quasi Yamabe soliton which can be defined on a Riemannian manifold as follows:
\bea
\frac{1}{2}(\pounds_V g)(X,Y) = (r - \lambda)g(X,Y) + \alpha V^\#(X)V^\#(Y), \label{1.3}
\eea
where $V^\#$ is the dual 1-form of $V$, $\lambda$ is a constant and $\alpha$ is a smooth function. If $V$ is a gradient of some smooth function $f$, then the above notion is called quasi Yamabe gradient soliton and then (\ref{1.3}) can be written as
\bea
\nabla^2 f = (r - \lambda)g + \alpha df \otimes df. \label{1.4}
\eea
The quasi Yamabe soliton is said to be shrinking, steady or expanding according as $\lambda < 0$, $\lambda = 0$ or $\lambda > 0$ respectively. In \cite{Li}, it is proved that the scalar curvature of a compact quasi Yamabe soliton is constant. In \cite{lf}, the author proved that a quasi Yamabe gradient soliton on a complete non-compact manifold has warped product structure. In \cite{ag}, Ghosh proved a similar result for a Kenmotsu manifold.
Since a $(k,-2)'$-almost Kenmotsu manifold is locally isometric to a warped product space and the scalar curvature is also constant ( see Theorem 4.2 of \cite{dp}), the results proved in \cite{Li} and \cite{ag} are trivially holds in a $(k,-2)'$-almost Kenmotsu manifolds admitting quasi Yamabe gradient soliton. This motivates us to consider only quasi Yamabe soliton on $(k,\mu)'$-almost Kenmotsu manifolds. \\

\noindent The paper is organized as follows:  In section 2, some preliminary results on almost Kenmotsu manifolds are presented.  Section 3 deals with $(k,\mu)'$-almost Kenmotsu manifolds admitting quasi Yamabe soliton. Section 4 is contains an example to support the main result.

\section{\textbf{Preliminaries}}

A $(2n+1)$-dimensional almost contact metric manifold is a differentiable manifold $M$     together with a structure $(\phi,\xi,\eta,g)$ satisfying
\be
\phi^{2}=-I+\eta\otimes\xi,\;\; \eta(\xi)=1, \;\; \phi \xi = 0, \;\; \eta \circ \phi = 0,\label{2.1}
\ee
\be
 g(\phi X,\phi Y)=g(X,Y)-\eta(X)\eta(Y), \label{2.2}
\ee
for any vector fields $X$, $Y$ on $M$, where $\phi$ is a $(1,1)$-tensor field, $\xi$ is a unit vector field, $\eta$ is a 1-form, $g$ is the Riemannian metric and $I$ is the identity endomorphism. The fundamental 2-form $\Phi$ on an almost contact metric manifold is defined by $\Phi(X,Y)=g(X,\phi Y)$ for any $X$, $Y$ on $M$. Almost contact metric manifold such that $\eta$ is closed and $d\Phi=2\eta\wedge\Phi$ are called almost Kenmotsu manifolds (see \cite{dp}, \cite{pv}). \\

\noindent Let $M$ be a $(2n + 1)$-dimensional almost Kenmotsu manifold. We denote by $h=\frac{1}{2}\pounds_{\xi}\phi$ and $l=R(\cdot, \xi)\xi$ on $M$. The tensor fields $l$ and $h$ are symmetric operators and satisfy the following relations \cite{pv}:
 \be
h\xi=0,\;l\xi=0,\;tr(h)=0,\;tr(h\phi)=0,\;h\phi+\phi h=0, \label{2.3}
 \ee
\be
 \nabla_{X}\xi=X - \eta(X)\xi - \phi hX(\Rightarrow \nabla_{\xi}\xi=0), \label{2.4}
\ee
 \be
  \phi l \phi-l = 2(h^{2} - \phi^{2}),\label{2.5}
 \ee
 \be
   R(X,Y)\xi = \eta(X)(Y - \phi hY) - \eta(Y)(X - \phi hX)+(\nabla_{Y}\phi h)X - (\nabla_{X}\phi h)Y, \label{2.6}
 \ee
 for any vector fields $X,\;Y$ on $M$. The $(1,1)$-type symmetric tensor field $h'=h\circ\phi$ is anti-commuting with $\phi$ and $h'\xi=0$. Also it is clear that (\cite{dp}, \cite{waa})
 \bea
  h=0\Leftrightarrow h'=0,\;\;h'^{2}=(k+1)\phi^2(\Leftrightarrow h^{2}=(k+1)\phi^2).\label{2.7}
\eea
In \cite{dp}, Dileo and Pastore introduced the notion of $(k,\mu)'$-nullity distribution, on an almost  Kenmotsu manifold  $(M, \phi, \xi, \eta, g)$, which is defined for any $p \in M$ and $k,\mu \in \mathbb {R}$ as follows:
\bea
 \nonumber N_{p}(k,\mu)'&=&\{Z\in T_{p}(M):R(X,Y)Z = k[g(Y,Z)X-g(X,Z)Y]\\&&+\mu[g(Y,Z)h'X-g(X,Z)h'Y]\}.\label{2.8}
 \eea

\noindent In \cite{dp}, it is proved that in a $(k,\mu)'$-almost Kenmotsu manifold $M^{2n+1}$ with $h' \neq 0$, $k < -1,\;\mu = -2$ and Spec$(h') = \{0,\delta,-\delta\}$, with $0$ as simple eigen value and $\delta = \sqrt{-k-1}$.

\noindent In \cite{wl}, Wang and Liu proved that for a $(2n + 1)$-dimensional $(k,\mu)'$-almost Kenmotsu manifold $M$ with $h' \neq 0$, the Ricci operator $Q$ of $M$ is given by
\bea
Q=-2nid+2n(k+1)\eta\otimes\xi-2nh'.\label{2.9}
\eea
Moreover, the scalar curvature of $M^{2n+1}$ is $2n(k-2n)$. From (\ref{2.7}), we have
\bea
R(X,Y)\xi=k[\eta(Y)X-\eta(X)Y]+\mu[\eta(Y)h'X-\eta(X)h'Y],\label{2.10}
\eea
where
$k,\mu\in\mathbb R.$ Also we get from (\ref{2.10})
\bea
R(\xi,X)Y = k[g(X,Y)\xi - \eta(Y)X] + \mu[g(h'X,Y)\xi - \eta(Y)h'X].\label{2.11}
 \eea
Contracting $X$ in (\ref{2.10}), we have
\bea
S(Y,\xi)=2nk\eta(Y). \label{2.12}
\eea
Using (\ref{2.3}), we have
\bea
(\nabla_X \eta)Y = g(X,Y) - \eta(X)\eta(Y) + g(h'X,Y). \label{2.13}
\eea
For further details on almost Kenmotsu manifolds, we refer the reader to go through the references (\cite{dey2}-\cite{dp}, \cite{pv}).

\section{\textbf{Quasi Yamabe soliton}}
In this section, we characterize $(k,\mu)'$-almost Kenmotsu manifolds admitting quasi Yamabe soliton $(g,V,\lambda,\alpha)$ such that $V$ is pointwise collinear with the characteristic vector field $\xi$. In this regard, to prove our main theorem, we need the following definition:
\begin{definition}
A vector field $V$ on an almost contact metric manifold $(M,\phi,\xi,\eta,g)$ is said to be an infinitesimal contact transformation if $\pounds_V \eta = f\eta$ for some smooth function $f$ on $M$. In particular, if $f = 0$, then $V$ is said to be a strict infinitesimal contact transformation.
\end{definition}

\begin{theorem} \label{t3.2}
If a $(2n + 1)$-dimensional $(k,\mu)'$-almost Kenmotsu manifold $M$  with $h' \neq 0$ admits quasi Yamabe soliton $(g,V,\lambda,\alpha)$ with the soliton vector field $V$ pointwise collinear with $\xi$, then
\begin{itemize}
\item[$(1)$] $V$ is a constant multiple of $\xi$.
\item[$(2)$] $V$ is a strict infinitesimal contact transformation.
\item[$(3)$] $(\pounds_V h')X = 0$ for any vector field $X$ on $M$.
\end{itemize}
\end{theorem}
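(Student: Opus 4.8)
The plan is to start from the pointwise collinearity hypothesis, writing $V = b\,\xi$ for some smooth function $b$ on $M$, and to extract information by feeding this into the quasi Yamabe soliton equation (\ref{1.3}). First I would compute the Lie derivative $(\pounds_V g)(X,Y)$ for $V = b\,\xi$. Using the standard identity $(\pounds_V g)(X,Y) = g(\nabla_X V, Y) + g(\nabla_Y V, X)$ together with the covariant derivative formula (\ref{2.4}), namely $\nabla_X \xi = X - \eta(X)\xi - \phi h X = X - \eta(X)\xi - h' X$ (recalling $h' = h\circ\phi$ so $\phi h = -h\phi \cdot \phi = \ldots$, and that $h'$ is symmetric), I expect
\be
\nonumber \tfrac{1}{2}(\pounds_V g)(X,Y) = b\,g(X,Y) - b\,\eta(X)\eta(Y) - b\,g(h'X,Y) + \tfrac{1}{2}\big[(Xb)\eta(Y) + (Yb)\eta(X)\big].
\ee
Since $V^\# = b\,\eta$, the right-hand side of (\ref{1.3}) becomes $(r-\lambda)g(X,Y) + \alpha b^2 \eta(X)\eta(Y)$, where $r = 2n(k-2n)$ is the constant scalar curvature recorded after (\ref{2.9}).

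Next I would compare the two expressions coefficient by coefficient by choosing suitable arguments. Setting $X = Y = \xi$ (so that $h'\xi = 0$ and $\eta(\xi)=1$) should isolate a relation of the form $\xi b = (r-\lambda) + \alpha b^2$. Then setting $Y = \xi$ with $X$ arbitrary orthogonal to $\xi$ should yield $Xb - (\xi b)\eta(X)$ in terms of $\eta(X)$, forcing the gradient of $b$ to be collinear with $\xi$, i.e. $Db = (\xi b)\xi$. Finally, taking $X, Y$ both orthogonal to $\xi$ gives the purely tensorial equation
\be
\nonumber b\,g(X,Y) - b\,g(h'X,Y) = (r-\lambda)g(X,Y),
\ee
valid on the contact distribution. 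The key leverage comes from the fact (stated after (\ref{2.8})) that $h'$ has eigenvalues $0, \delta, -\delta$ with $\delta = \sqrt{-k-1} \neq 0$ when $h' \neq 0$: applying the displayed relation to eigenvectors of $h'$ for $+\delta$ and $-\delta$ produces two equations $b(1\mp\delta) = (r-\lambda)$, whose difference forces $b\,\delta = 0$, hence $b$ is forced to be constant on the distribution, and combined with $Db = (\xi b)\xi$ this should give $Xb = 0$ for all $X$ and then $\xi b = 0$, so $b$ is a (nonzero) constant. This establishes (1).

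For (2), once $b$ is constant, the computation of $\pounds_V \eta$ is direct: using $(\pounds_V\eta)(X) = V(\eta(X)) - \eta([V,X])$, or equivalently via (\ref{2.13}), the constancy of $b$ together with $\eta(\xi)=1$ and the antisymmetry/properties of $h'$ should collapse $\pounds_V\eta$ to zero, showing $V = b\,\xi$ is a strict infinitesimal contact transformation. For (3), I would compute $(\pounds_V h')X = \pounds_V(h'X) - h'(\pounds_V X)$ with $V = b\,\xi$ and $b$ constant, so $\pounds_V = b\,\pounds_\xi$; the task reduces to showing $(\pounds_\xi h')X = 0$, which follows by differentiating the eigenstructure of $h'$ along $\xi$ or by invoking the known $\mu = -2$ structure equations of a $(k,\mu)'$-almost Kenmotsu manifold established in \cite{dp}. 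The main obstacle I anticipate is part (1): correctly separating the $\eta\otimes\eta$, the $g$, and the $h'$ contributions requires careful bookkeeping of the $\phi h$ versus $h'$ terms in (\ref{2.4}) and genuine use of the distinct eigenvalues of $h'$ to rule out the non-constant case — the eigenvalue argument is where the hypothesis $h'\neq 0$ is essential, and getting the sign/coefficient structure exactly right is the delicate point.
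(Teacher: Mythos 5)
Your route to part (1) is genuinely different from the paper's: the paper extracts only two scalar consequences of (\ref{1.3}) --- the trace over an orthonormal frame and the $(\xi,\xi)$-component --- and subtracts them to obtain $b=r-\lambda$, whereas you decompose the full tensorial equation and, crucially, test it on eigenvectors of $h'$. Two things go wrong in your write-up. First, a sign: since $h'=h\circ\phi$ and $h\phi+\phi h=0$, one has $\phi h=-h'$, so (\ref{2.4}) together with (\ref{2.13}) gives $\nabla_X\xi=X-\eta(X)\xi+h'X$, not $-h'X$; this only swaps the roles of the $\pm\delta$ eigenspaces and is harmless. Second, and seriously: your own eigenvector computation yields $b(1+\delta)=r-\lambda=b(1-\delta)$ on unit eigenvectors for $+\delta$ and $-\delta$, hence $b\delta=0$; since $\delta=\sqrt{-k-1}$ is a nonzero constant (the paper records $k<-1$ whenever $h'\neq 0$, and both $\pm\delta$ eigenspaces are nontrivial because $0$ is a simple eigenvalue), this forces $b\equiv 0$ --- not merely ``$b$ constant on the distribution,'' and certainly not ``$b$ is a (nonzero) constant.'' The inference you make from $b\delta=0$ to the desired conclusion is therefore a genuine logical gap, and it sits exactly where your computation and the theorem's statement part ways: carried out correctly, your approach proves that no nonzero $V=b\xi$ can satisfy (\ref{1.3}) on such a manifold, i.e.\ that the hypotheses are vacuous, because the soliton equation is over-determined on the $\pm\delta$ eigenspaces.

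It is worth noting that the paper's proof never meets this obstruction because it never evaluates (\ref{1.3}) on individual eigenvectors of $h'$; the trace and the $(\xi,\xi)$-component are mutually consistent and give $b=r-\lambda$. (The paper's own $5$-dimensional example already fails your pointwise test: there $\tfrac{1}{2}(\pounds_\xi g)(e_2,e_2)=2$ while $\tfrac{1}{2}(\pounds_\xi g)(e_4,e_4)=0$, yet both must equal $r-\lambda$.) For parts (2) and (3) your plan is workable but thinner than the paper's: (2) coincides with the paper's argument once $b$ is constant, while for (3) the paper computes $(\pounds_V R)(X,\xi)\xi$ in two ways via Yano's commutation formulas rather than asserting $\pounds_\xi h'=0$ outright; your reduction of (3) to $\pounds_\xi h'=0$ is correct in spirit but still requires a proof from the structure equations of \cite{dp}. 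The decisive issue, however, is part (1).
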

\begin{proof}
If $V$ is pointwise collinear with $\xi$, then there exist a non-zero smooth function $b$ on $M$ such that $V = b\xi$. Then from (\ref{1.3}), we have
\bea
\frac{1}{2}(\pounds_{b\xi} g)(X,Y) = (r - \lambda)g(X,Y) + \alpha b^2 \eta(X)\eta(Y). \label{3.1}
\eea
Now,
\bea
\nonumber (\pounds_{b\xi} g)(X,Y) = g(\nabla_X b\xi,Y) + g(\nabla_Y b\xi,X).
\eea
Using (\ref{2.4}), the foregoing equation reduces to
\bea
\nonumber (\pounds_{b\xi} g)(X,Y) &=& (Xb)\eta(Y) + (Yb)\eta(X) \\ &&+ 2b[g(X,Y) - \eta(X)\eta(Y) - g(\phi hX,Y)]. \label{3.2}
\eea
Let $\{e_i\}$ be an orthonormal basis of the tangent space at each point of $M$. Now, substituting (\ref{3.2}) in (\ref{3.1}) and taking $X = Y = e_i$ and summing over $i$, we obtain
\bea
(\xi b) = (r - \lambda)(2n + 1) + \alpha b^2 - 2nb. \label{3.3}
\eea
Again substituting (\ref{3.2}) in (\ref{3.1}) and then putting $X = Y = \xi$ yields
\bea
(\xi b) = r - \lambda + \alpha b^2. \label{3.4}
\eea
Equating (\ref{3.3}) and (\ref{3.4}), we obtain
\bea
b = r - \lambda. \label{3.5}
\eea
Since $\lambda$ is constant and $r = 2n(k - 2n)$ is constant, $b$ is also a constant. This proves $(1)$.\\
Now, from (\ref{3.4}), we have
\bea
\alpha b^2 = - (r - \lambda), \label{3.6}
\eea
which implies $\alpha$ is also a constant. Now, since $V = b\xi$, we write (\ref{1.3}) as
 \bea
 \frac{1}{2}(\pounds_V g)(X,Y) = (r - \lambda)g(X,Y) + \alpha b^2\eta(X)\eta(Y). \label{3.7}
 \eea
Differentiating the above equation covariantly along any vector field $Z$, we get
\bea
\frac{1}{2}(\nabla_Z \pounds_V g)(X,Y) =  \alpha b^2[\eta(Y)(\nabla_Z \eta)X + \eta(X)(\nabla_Z \eta)Y]. \label{3.8}
\eea
It is well known that (see \cite{yano})
\be
\nonumber (\pounds_V \nabla_X g - \nabla_X \pounds_V g - \nabla_{[V,X]}g)(Y,Z) = -g((\pounds_V \nabla)(X,Y),Z) - g((\pounds_V \nabla)(X,Z),Y).
\ee
Since $\nabla g = 0$, then the above relation becomes
\bea
(\nabla_X \pounds_V g)(Y,Z) = g((\pounds_V \nabla)(X,Y),Z) + g((\pounds_V \nabla)(X,Z),Y). \label{3.9}
\eea
Since $\pounds_V \nabla$ is symmetric, then it follows from (\ref{3.9}) that
\bea
\nonumber g((\pounds_V \nabla)(X,Y),Z) &=& \frac{1}{2}(\nabla_X \pounds_V g)(Y,Z) + \frac{1}{2}(\nabla_Y \pounds_V g)(X,Z) \\ && - \frac{1}{2}(\nabla_Z \pounds_V g)(X,Y). \label{3.10}
\eea
Using (\ref{2.13}) and (\ref{3.8}) in (\ref{3.10}) we have
\bea
\nonumber g((\pounds_V \nabla)(X,Y),Z) =  2\alpha b^2[g(X,Y) - \eta(X)\eta(Y) + g(h'X,Y)]\eta(Z),
\eea
which implies
\bea\label{3.11}
 (\pounds_V \nabla)(X,Y) = 2\alpha b^2[g(X,Y) - \eta(X)\eta(Y) + g(h'X,Y)]\xi.
\eea
Substituting $Y = \xi$ in (\ref{3.11}), we get $(\pounds_V \nabla)(X,\xi) = 0$. From which we obtain $\nabla_Y (\pounds_V \nabla)(X,\xi) = 0$. This gives
\bea
\nonumber (\nabla_Y \pounds_V \nabla)(X,\xi) = - (\pounds_V \nabla)(\nabla_Y X,\xi) - (\pounds_V \nabla)(X,\nabla_Y \xi).
\eea
Using $(\pounds_V \nabla)(X,\xi) = 0$, (\ref{3.11}) and (\ref{2.4}) in the foregoing equation, we infer that
\bea
\nonumber (\nabla_Y \pounds_V \nabla)(X,\xi) &=& - 2\alpha b^2[ g(X,Y) - \eta(X)\eta(Y) \\ &&+ 2g(h'X,Y) + g(h'^2X,Y)]\xi. \label{3.12}
\eea
Using the foregoing equation in the following formula (see \cite{yano})
\bea
\nonumber (\pounds_V R)(X,Y)Z = (\nabla_X \pounds_V \nabla)(Y,Z) - (\nabla_Y \pounds_V \nabla)(X,Z),
\eea
we obtain
\bea
(\pounds_V R)(X,\xi)\xi = (\nabla_X \pounds_V \nabla)(\xi,\xi) - (\nabla_\xi \pounds_V \nabla)(X,\xi) = 0. \label{3.13}
\eea
Now, substituting $Y = \xi$ in (\ref{3.7}) and using (\ref{3.6}), we get $(\pounds_V g)(X,\xi)  = 0$, which implies
\bea
(\pounds_V \eta)X = g(X,\pounds_V \xi). \label{3.14}
\eea
Since $V = b\xi$ and $b$ is a constant, then we can easily obtain $\pounds_V \xi = 0$ and hence, from (\ref{3.14}), we have $(\pounds_V \eta)X = 0$ for any vector field $X$ on $M$. Therefore, $V$ is a strict infinitesimal contact transformation. This proves $(2)$.\\
From (\ref{2.10}), we have
\bea
 R(X,\xi)\xi = k(X - \eta(X)\xi) - 2h'X. \label{3.15}
\eea
Now, using (\ref{3.14})-(\ref{3.15}) and (\ref{2.10})-(\ref{2.11}), we obtain
\bea
 (\pounds_V R)(X,\xi)\xi = - 2(\pounds_V h')X. \label{3.16}
\eea
Equating (\ref{3.13}) and (\ref{3.16}), we obtain
\bea
(\pounds_V h')X = 0, \label{3.17}
\eea
for any vector field $X$ on $M$. This proves $(3)$ and the proof is complete.
\end{proof}

\begin{remark}
From (\ref{3.5}), we can see that $\lambda = r - b$. Since $r = 2n(k - 2n) < 0$ as $k < -1$, then $\lambda < 0$ when $b > 0$, $\lambda = 0$ when $b = 2n(k - 2n)$ and $\lambda > 0$ when $b < 2n(k - 2n)$. Hence, the quasi Yamabe soliton is shrinking, steady or expanding according as $b > 0$, $b = 2n(k - 2n)$ or $b < 2n(k - 2n)$ respectively.
\end{remark}
\begin{remark}
It is known that for smooth tensor field $T$, $\pounds_X T = 0$ if and only if $\phi_t$ is a symmetric transformation for $T$, where $\{\phi_t: t\in \mathbb R\}$ is the $1$-parameter group of diffeomorphisms corresponding to the vector filed $X$ on a manifold \cite{rim}. Since $h'$ is a smooth tensor field of type $(1,1)$ on $M$, then $(\pounds_V h')X = 0$ if and only if  $\psi_t$ is a symmetric transformation for $h'$, where $\{\psi_t: t\in \mathbb R\}$ is the $1$-parameter group of diffeomorphisms corresponding to the vector filed $V$.
\end{remark}

\section{\textbf{Example of a shrinking quasi Yamabe soliton}}
 In \cite{dp}, Dileo and Pastore presented an example of a $(2n + 1)$-dimensional $(k,\mu)'$-almost Kenmotsu manifold. In \cite{dey1}, the authors studied this for $5$-dimensional case and obtained $k = -2$.\\
 We now mention the necessary results from that example to verify our result.\\
$$ [\xi,e_2] = -2e_2,\; [\xi,e_3]=-2e_3,\; [\xi,e_4] = 0,\; [\xi,e_5] = 0,   $$
$ [e_i,e_j] = 0, $ where $ i, j = 2,3,4,5.$
The Riemannian metric is defined by
$$ g(\xi,\xi) = g(e_2,e_2) = g(e_3,e_3) = g(e_4,e_4) = g(e_5,e_5) = 1 $$
and $ g(\xi,e_i) = g(e_i,e_j) =0 $ for $i \neq j$; $ i, j = 2,3,4,5.$ \\
The 1-form $\eta$ is defined by $ \eta(Z) = g(Z,\xi) $, for any $Z \in T(M)$.\\
Also,  $ h'\xi = 0, \; h'e_2 = e_2, \; h'e_3 = e_3, \; h'e_4 = -e_4, \; h'e_5 = -e_5.$ \\
 The scalar curvarure is $r = -80$.\\
Now, it is easy to see that $$(\pounds_\xi g)(\xi,\xi) = (\pounds_\xi g)(e_4,e_4) = (\pounds_\xi g)(e_5,e_5) = 0,$$
$$(\pounds_\xi g)(e_2,e_2) = (\pounds_\xi g)(e_3,e_3) = 4.$$
Now, it can be easily verified from (\ref{1.3}) that $(g,V,\lambda,\alpha) = (g,\xi,-41,121)$ is a shrinking quasi Yamabe soliton, where $\lambda = -49 < 0$ and $b = 1 > 0$.\\
Also, $(\pounds_\xi \eta)X = 0$ and $(\pounds_\xi h')X = 0$ for all $X \in \{e_2, e_3, e_4, e_5\}$.\\
This verifies our theorem \ref{t3.2}.

\end{document}